\newtheorem{lemma}{Lemma}
\newtheorem{theorem}{Theorem}
\newtheorem{corollary}{Corollary}
\newtheorem{definition}{Definition}
\newcommand{\bigO}{\mathcal{O}}
\newcommand{\smallo}{o}
\newcommand{\esp}{\mathds{E}}
\newcommand{\var}{\operatorname{\mathds{V}ar}}
\newcommand{\smat}[1]{\left( \begin{smallmatrix} #1 \end{smallmatrix} \right)}
\newcommand{\setg}{\mathcal{G}}
\newcommand{\vect}[1]{\mathbf{#1}}
\newcommand{\Line}{\operatorname{Path}}
\newcommand{\Cycle}{\operatorname{Cycle}}
\newcommand{\RR}{\mathds{R}}
\newcommand{\HH}{\mathcal{H}}
\newcommand{\LL}{\mathcal{L}}
\newcommand{\real}{\operatorname{Re}}
\newcommand{\setmg}{\mathcal{MG}}
\newcommand{\MG}{G} 
\newcommand{\MA}{A} 
\newcommand{\edge}[1]{#1}
\newcommand{\seqv}{\operatorname{seqv}}
\newcommand{\Edge}{\operatorname{edge}}
\newcommand{\zu}{\zeta_{\vect{u}}}
\newcommand{\zo}{\zeta_{\vect{1}}}
\title{Limit law for the numbers of components of fixed sizes\\ of graphs with degree one or two}
\author{
  \'Elie de Panafieu
  \thanks{RISC -- Johannes Kepler University, Linz -- Austria.
  \textbf{Email:} \href{mailto:depanafieuelie@gmail.com}{depanafieuelie@gmail.com}} 
  \and 
  Nicolas Broutin
  \thanks{Inria Paris--Rocquencourt, Domaine de Voluceau, 78153 Le Chesnay -- France. 
  \textbf{Email:} \href{mailto:nicolas.broutin@inria.fr}{nicolas.broutin@inria.fr}}
}
\begin{document}
\maketitle

In this note, we answer a question of Giardin\`a, Giberti, van der Hofstad and Prioriello~\cite{GGHP14}.
We consider labelled simple graphs
with all vertices of degree $1$ or $2$.
We denote by $\setg_{n_1, n_2}$ the set of such graphs 
with $n_1$ vertices of degree~$1$,
and $n_2$ vertices of degree~$2$.
Throughout the document, the \emph{size} refers to the number of vertices. 
The random variable $U_j$ counts 
the number of connected components
of size $j$ in a graph drawn 
uniformly at random from $\setg_{n_1, n_2}$.
The main result is that
for any fixed integer $q \geq 2$
and real number $\alpha>0$, for $n_1=2k$ is a large even number 
and $n_2 = \lfloor \alpha n_1 / 2 \rfloor$,
the random vector $(U_2, \ldots, U_q)$ has a Gaussian limit distribution as $k\to\infty$.
In Section~\ref{sec:multi}, we extend those results to multigraphs,
with the distribution induced by the configuration model.
The vector $(U_2, \ldots, U_q)$ has the same Gaussian limit distribution,
while $U_1$ follows a Poisson law.

\begin{theorem}\label{thm:main}
Let $\alpha>0$. For $n_1$ an even let $n_2=\lfloor \alpha n_1/2\rfloor$. 
For $j\ge 2$, let $U_j$ denote the number of connected components of size $j$ in 
a uniformly random graph from $\setg_{n_1,n_2}$. Then, for every $j$, as $n_1\to\infty$ along the 
even integers, 
\[
\esp [U_j]\sim \frac{\alpha^{j-2}}{(1+\alpha)^{j-1}} \frac {n_1}2
\qquad \text{and}\qquad
\var(U_j) = O\left(\frac{n_1}2\right).
\]
Furthermore, for any integer $q\ge 2$, as $n_1\to\infty$ along the even integers, 
the vector
\[
\frac{1}{\sqrt{n_1/2}}
  \left(
  U_j
  - \frac{\alpha^{j-2}}{(1+\alpha)^{j-1}}
  \frac{n_1}{2}
  \right)_{2\le j\le q}
\]
converges in distribution to a multivariate Gaussian $\mathcal{N}(\vect{0}, \HH(\alpha))$,
where the positive semi-definite matrix $\HH(\alpha)=(\HH_{i,j}(\alpha), 2\le i,j\le q)$ 
is given by
\[
  \HH_{i,j}(\alpha) = 
  - \frac{\alpha^{i+j-4}}{(1+\alpha)^{i+j-2}}
  \left( 1 + \frac{(i-2-\alpha)(j-2-\alpha)}{\alpha(1+\alpha)} \right) 
  + \frac{\mathbb{1}_{i=j}}{1+\alpha} \left(\frac \alpha {1+\alpha}\right)^{i-2}.
\]
\end{theorem}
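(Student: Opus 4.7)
Since every vertex has degree $1$ or $2$, every connected component is either a path on $j \ge 2$ vertices (two degree-$1$ endpoints and $j-2$ interior degree-$2$ vertices) or a cycle on $j \ge 3$ vertices. With $x,y$ marking vertices of degree $1$ and $2$ respectively and $w_j$ marking components of size $j$, the bivariate exponential generating function is
\begin{equation*}
G(x,y;\vect{w}) = \exp\!\Bigg( w_2 \frac{x^2}{2} + \sum_{j\ge 3} w_j\Big(\frac{x^2 y^{j-2}}{2} + \frac{y^j}{2j}\Big) \Bigg).
\end{equation*}
Setting $w_j = 1$ for $j > q$, the joint probability generating function of $(U_2,\dots,U_q)$ equals the ratio $[x^{n_1} y^{n_2}] G(x,y;\vect{w}) / [x^{n_1} y^{n_2}] G(x,y;\vect{1})$. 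The whole analysis will rest on a bivariate saddle-point estimate of each of these extractions.

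At $\vect{w} = \vect{1}$ the generating function simplifies to $G(x,y;\vect{1}) = (1-y)^{-1/2} \exp\!\big(\tfrac{x^2}{2(1-y)} - \tfrac{y}{2} - \tfrac{y^2}{4}\big)$, and the saddle-point equations $x\partial_x \log G = n_1$, $y\partial_y \log G = n_2$ solve to leading order as $y^\ast = \alpha/(1+\alpha)$ and $x^{\ast 2} = n_1/(1+\alpha)$. A standard bivariate saddle-point argument (Cauchy integrals on two concentric circles, tail truncation, and local quadratic approximation) yields the denominator. The mean formula for $U_j$ then follows by noting that the numerator of $\esp[U_j]$ is $[x^{n_1} y^{n_2}]\big(\tfrac{x^2 y^{j-2}}{2} + \tfrac{y^j}{2j}\big) G(x,y;\vect{1})$, which is asymptotic to the same extraction with an extra factor evaluated at the saddle; the path contribution gives $x^{\ast 2} y^{\ast\,j-2}/2 = \frac{\alpha^{j-2}}{(1+\alpha)^{j-1}} \cdot \frac{n_1}{2}$ and the cycle contribution is only $O(1)$. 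The variance bound $O(n_1/2)$ follows from the analogous extraction with two marked components.

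For the joint Gaussian limit I would set $w_j = e^{t_j/\sqrt{n_1/2}}$ and study the cumulant generating function of the centered, rescaled vector. Writing the perturbed saddle as $(x^\ast_{\vect{w}}, y^\ast_{\vect{w}}) = (x^\ast(1 + \xi/\sqrt{n_1/2}),\, y^\ast + \eta/\sqrt{n_1/2}) + O(1/n_1)$, the displacement $(\xi,\eta)$ is determined by a $2\times 2$ linear system whose right-hand side is a linear combination of $t_2,\ldots,t_q$. Substituting into the saddle-point expansion, the terms linear in $\vect{t}$ are cancelled by the centering $-\sum_j t_j\,\esp[U_j]/\sqrt{n_1/2}$, and what remains converges to a quadratic form $\tfrac{1}{2}\vect{t}^\top \HH(\alpha)\vect{t}$. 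Curtiss' theorem (or Lévy's continuity theorem with imaginary $t_j$) then delivers the multivariate Gaussian limit.

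The main obstacle is the careful perturbative bookkeeping needed to identify $\HH(\alpha)$. The diagonal piece $\frac{1}{1+\alpha}(\alpha/(1+\alpha))^{i-2}$ is the direct second derivative of $\log G$ at the unperturbed saddle and equals $\esp[U_i]/(n_1/2)$, reflecting a shot-noise (Poisson-like) contribution. The off-diagonal correction involving $(i-2-\alpha)(j-2-\alpha)/(\alpha(1+\alpha))$ is produced by the saddle-point shift: the quantity $(j-2)-\alpha$ measures the deviation of the degree-$2$ vertex content of a size-$j$ path from the population average $\alpha$, and this drives $\eta$ and couples the $i$ and $j$ contributions through the two linear constraints fixing $n_1$ and $n_2$. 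Getting the signs and the exact prefactor right in this bilinear form is the only nontrivial computation.
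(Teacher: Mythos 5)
Your plan is correct in outline and uses the same high-level strategy as the paper: mark component sizes in a generating function, estimate the coefficients by a saddle-point (Fourier--Laplace) argument uniformly in the marking variables, then read off the Gaussian limit from the quadratic part of the cumulant generating function (quasi-power style). The one structural difference is that you keep a genuinely bivariate saddle point in $(x,y)$, whereas the paper observes that, in your notation, $[x^{n_1}]\exp\bigl(x^2\Line(y,\vect{w})/2+\Cycle(y,\vect{w})\bigr)=\frac{(\Line(y,\vect{w})/2)^{n_1/2}}{(n_1/2)!}e^{\Cycle(y,\vect{w})}$ exactly for $n_1$ even, so the $x$-integration can be performed \emph{a priori} and only a univariate saddle in $y$ remains, with exponent $\phi(\theta,\vect{u})\,n_1/2$. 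That reduction avoids having to establish uniform nondegeneracy of a $2\times 2$ complex Hessian and all the associated two-dimensional tail estimates; in exchange, your bivariate reading makes the saddle equations $x\partial_x\log G=n_1$, $y\partial_y\log G=n_2$ and the interpretation of the off-diagonal correction $(j-2-\alpha)$ as a ``deviation of internal-vertex content from its mean'' more transparent. Two gaps worth flagging: you defer the actual Hessian computation, which is in fact the substance of identifying $\HH(\alpha)$ (the paper does it by implicit differentiation of the saddle equation $\zeta\partial_z\log\Line(\zeta,\vect{u})=\alpha$ with respect to the $u_i$'s, producing exactly the bilinear term you describe); and you do not verify that $\HH(\alpha)$ is positive semi-definite, which is needed for the limit to be a legitimate Gaussian Laplace transform — the paper gets this for free by noting the cumulant generating function $\log\LL^{(\vect{V})}_{n_1,n_2}$ is convex for each $n$, hence so is its pointwise limit $\frac12\vect{t}\cdot\HH\cdot\vect{t}$.
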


Specifically, in order to prove the theorem, we show that in the neighborhood of the vector $\vect{0}$,
the multivariate Laplace transform of the rescaled random variables converges point-wise
to the Laplace transform of a multivariate Gaussian distribution.

In order to simplify the formulation of the lemmas, we introduce the uniform Landau notation
\[
  f(z, \vect{u}) \underset{z \rightarrow z_0}{=} \bigO_V(g(z))
\]
which means that there exists two constants $K$ and $\delta > 0$
independent of $\vect{u}$ such that 
for all $\vect{u}$ in $V$ and $|z - z_0| \leq \delta$,
\[
  \left| f(z, \vect{u}) \right| \leq K g(z).
\]
As usual with the Landau notation, the limit $z_0$
is often implicit when the context leaves no ambiguity.
This definition extends naturally to the ``\emph{small o}''
Landau notation.

    \section{Expression of the generating function}

To derive the limit distribution of the sizes
of the components in $\setg_{n_1, n_2}$,
we analyze its generating function,
defined below.

\begin{definition}
  Let $q\ge 2$ be a natural number.
  Let $\vect{u}$ denote the vector $(u_2, \ldots, u_q)$,
  and $G_{n_1, n_2}(\vect{u})$ denote the ordinary multivariate generating function
  of graphs in $\setg_{n_1, n_2}$ where for $j=2,3,\dots, q$,
  the variable $u_j$ marks the number of connected components of size $j$.
  Therefore, the number of graphs in $\setg_{n_1, n_2}$
  with $m_j$ components of size $j$ for all $2 \leq j \leq q$ is
  \[
    [u_2^{m_2} \ldots u_q^{m_q}] G_{n_1, n_2}(\vect{u}).
  \]
\end{definition}

The following notations will also prove useful
throughout the paper.

\begin{definition}
  The sequence $(v_{n_1, n_2})$ 
  and the multivariate generating functions
  $\Line(z, \vect{u})$ and $\Cycle(z, \vect{u})$
  are defined by
  \begin{align*}
    v_{n_1, n_2} 
    &= 
    \frac{(n_1+n_2)!}{2^{n_1/2} (n_1/2)!},
    \\
    \Line(z,\vect{u}) 
    &= 
    \frac{1}{1-z} + \sum_{j=2}^q (u_j-1) z^{j-2}, 
    \\
    \Cycle(z, \vect{u}) 
    &= 
    \frac{1}{2} \log \frac{1}{1-z} - \frac{z}{2} - \frac{z^2}{4} + \sum_{j=3}^q (u_j-1) \frac{z^j}{2 j}.
  \end{align*}
\end{definition}

In the following lemma, we derive a simple exact formula
for the generating function of the graphs in $\setg_{n_1, n_2}$
with variables marking the components of sizes from $2$ to $q$.

\begin{lemma} \label{th:Gexact}
  
  The generating function $G_{n_1, n_2}(\vect{u})$ is zero when $n_1$ is odd,
  otherwise, it is given by
  \begin{equation} \label{eq:Gexact}
    G_{n_1, n_2}(\vect{u})
    =
    v_{n_1, n_2}
    [z^{n_2}] e^{\Cycle(z, \vect{u})} \Line(z,\vect{u})^{n_1/2}.
  \end{equation}
\end{lemma}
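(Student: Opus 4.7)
The plan is to apply the symbolic method with bivariate exponential generating functions. The key structural observation is that a graph in which every vertex has degree one or two decomposes uniquely as a disjoint union of simple paths on at least two vertices (whose two endpoints carry the degree-$1$ vertices and whose interior vertices have degree $2$) and simple cycles on at least three vertices (all of whose vertices have degree $2$). Since the number of odd-degree vertices must be even, $G_{n_1, n_2}(\vect{u}) = 0$ whenever $n_1$ is odd.

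For even $n_1$, I would introduce a bivariate EGF $F(x, y, \vect{u})$ in which $x$ marks degree-$1$ atoms and $y$ marks degree-$2$ atoms. A routine application of the standard labelled counts for paths and cycles gives the contribution $\frac{x^{2} y^{j-2}}{2}$ for a path on $j \geq 2$ vertices (two endpoints, $j-2$ interior vertices, divided by the reversal symmetry) and $\frac{y^j}{2j}$ for a cycle on $j \geq 3$ vertices. Weighting components of size $2 \le j \le q$ by the marker $u_j$ and leaving larger sizes unmarked, the sums over $j$ produce exactly $\frac{x^2}{2}\,\Line(y, \vect{u})$ and $\Cycle(y, \vect{u})$; the exp--log principle for labelled structures then yields
\[
F(x, y, \vect{u}) = \exp\!\left(\Cycle(y, \vect{u}) + \frac{x^2}{2}\,\Line(y, \vect{u})\right).
\]

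To conclude, I would relate $F$ to $G_{n_1, n_2}$ as follows. Any graph in $\setg_{n_1, n_2}$ induces a natural partition of its vertex set into the degree-$1$ and degree-$2$ subsets, whereas the EGF $F$ counts graphs on two separately labelled vertex sets; summing over the $\binom{n_1+n_2}{n_1}$ choices of the degree-$1$ subset of $[n_1+n_2]$ gives
\[
G_{n_1, n_2}(\vect{u}) = (n_1+n_2)! \, [x^{n_1} y^{n_2}] F(x, y, \vect{u}).
\]
Extracting $[x^{n_1}]$ from $\exp\!\left(\frac{x^2}{2}\Line(y,\vect{u})\right)$ yields $\frac{\Line(y, \vect{u})^{n_1/2}}{2^{n_1/2}(n_1/2)!}$ when $n_1$ is even, and the prefactor $(n_1+n_2)!/(2^{n_1/2}(n_1/2)!)$ is by definition $v_{n_1, n_2}$; renaming $y$ as $z$ gives equation~(\ref{eq:Gexact}).

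The only subtle bookkeeping step is the transition from the bivariate EGF, whose objects have a pre-partitioned vertex set, to graphs in $\setg_{n_1, n_2}$, whose vertex set is not pre-partitioned by degree; this accounts for the factor $(n_1+n_2)!$ rather than $n_1!\,n_2!$. Everything else is a routine verification.
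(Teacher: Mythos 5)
Your proof is correct and follows essentially the same route as the paper's: both rest on the path-and-cycle decomposition together with the labelled set construction and arrive at the same coefficient extraction. The only difference is presentational --- you package the argument as a bivariate EGF $F(x,y,\vect{u})$ with $x$ marking degree-$1$ atoms and $y$ marking degree-$2$ atoms, and extract $[x^{n_1}y^{n_2}]$ directly, whereas the paper keeps a univariate EGF in the degree-$2$ variable alone and supplies the degree-$1$ labels afterwards (choosing the label subset, permuting, and halving the oriented-path count), which yields exactly the same formula.
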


\begin{proof}
  A component of a graph in $\setg_{n_1, n_2}$ is either
  a non-oriented path of size at least $2$,
  or a non-oriented cycle of size at least $3$.
  For simplicity, in the following we refer to those connected graphs 
  as \emph{paths} and \emph{cycles}.

  Let us first consider vertices of degree $1$ as unlabelled (we will label them later on).
  The number of \emph{oriented} paths with $n_2$ vertices of degree $2$
  is then $n_2!$ and the number of non-oriented cycles $n_2 ! / (2 n_2)$.
  Let the variable $z$ mark the vertices of degree $2$, 
  then the exponential generating functions of oriented paths and non-oriented cycles are respectively
  \begin{align*}
    \sum_{n_2 \geq 0} n_2! \frac{z^{n_2}}{n_2!} &= \frac{1}{1-z},\\
    \sum_{n_2 \geq 3} \frac{n_2!}{2 n_2} \frac{z^{n_2}}{n_2!} 
    &= \frac{1}{2} \log \left( \frac{1}{1-z} \right) - \frac{z}{2} - \frac{z^2}{4}.
  \end{align*}

  Now for all $2 \leq j \leq q$, we introduce the variable $u_j$ 
  to mark the components of size $j$.
  Since a path with $n_2$ vertices of degree $2$ is a connected component of size $n_2+2$,
  in the generating function of oriented paths,
  for $j$ from $2$ to $q$, the $j$th coefficient is multiplied by $u_{j+2}$.
  Similarly, the $j$th coefficient of the generating function of cycles
  is multiplied by $u_j$.
  Finally, the generating functions of oriented paths and non-oriented cycles,
  exponential with respect to $z$ and ordinary
  with respect to all $u_j$, are
  \begin{align*}
    \Line(z, \vect{u}) 
    &= 
    \frac{1}{1-z} + \sum_{j=2}^q (u_j-1) z^{j-2},
    \\
    \Cycle(z, \vect{u}) 
    &= 
    \frac{1}{2} \log \frac{1}{1-z} - \frac{z}{2} - \frac{z^2}{4} + \frac{1}{2} \sum_{j=3}^q (u_j-1) \frac{z^j}{j}.
  \end{align*}

  Note that each path contains exactly two vertices of degree $1$,
  while all the vertices of a cycle have degree $2$.
  Therefore, $\setg_{n_1, n_2}$ is empty when $n_1$ is odd 
  and a graph in $\setg_{n_1, n_2}$ is a set of $n_1/2$ non-oriented paths
  and a set containing an arbitrary number of non-oriented cycles.
  %
  %
  We now add labels to the vertices of degree $1$.
  The set of those labels can be any of the $\frac{(n_1+n_2)!}{n_1! n_2!}$
  subsets of size $n_1$ of $\{1,2,\dots, n_1 + n_2\}$.
  We then need to choose a permutation of size $n_1$
  to associate to each vertex of degree $1$ its label. (Note here that the generating function 
  $\Line(z,\vect{u})$ above counts \emph{oriented} paths, so that the vertices of degree one 
  are distinguished.) Furthermore, each non-oriented path matches exactly
  two oriented paths, so we replace $\Line(z,\vect{u})$ with $\Line(z,\vect{u})/2$.
  Finally, the generating function $G_{n_1, n_2}(\vect{u})$ is
  \[
    G_{n_1, n_2}(\vect{u}) = 
    \frac{(n_1+n_2)!}{n_1! n_2!}
    n_1!
    n_2!
    [z^{n_2}] e^{\Cycle(z, \vect{u})} \frac{(\Line(z, \vect{u})/2)^{n_1/2}}{(n_1/2)!},
  \]
  which reduces to the result of the lemma.
\end{proof}

We will obtain in Lemma~\ref{th:Gapprox} 
a uniform asymptotic estimate of $G_{n_1, n_2}(\vect{u})$ 
using the Fourier--Laplace method.
In the next corollary, we reformulate the exact expression 
derived in Lemma~\ref{th:Gexact}
to adopt a form that is a more adapted to this method.
In particular, the coefficient extraction
is replaced by an integral, and the variables $u_2,u_3,\dots, u_q$
are considered as positive real numbers.

\begin{corollary} \label{th:integralform}
  For any $\zeta\in (0,1)$, and $\vect{u}$ in a neighborhood of $\vect{1}$,
  \[
    G_{n_1, n_2}(\vect{u})
    =
    \frac{v_{n_1, n_2}}{2 \pi} 
    \frac{\Line(\zeta, \vect{u})^{n_1/2}}{\zeta^{n_2}}
    \int_{- \pi}^{\pi}
    A(\theta, \vect{u})
    e^{-\phi(\theta, \vect{u}) n_1/2}  
    d \theta,
  \]
  where $\phi$ and $A$ are defined by
  \begin{align*}
    \phi(\theta, \vect{u})
    &=
    \log(\Line(\zeta, \vect{u})) - \log(\Line(\zeta e^{i \theta}, \vect{u})) + i \alpha \theta,
    \\
    A(\theta, \vect{u})
    &=
    \exp \left( \Cycle(\zeta e^{i \theta}, \vect{u}) \right).
  \end{align*}
\end{corollary}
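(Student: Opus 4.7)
The plan is to translate the coefficient extraction of Lemma \ref{th:Gexact} into a contour integral via Cauchy's formula, and then perform a purely algebraic rearrangement to expose the structure of a Laplace integral with large parameter $n_1/2$.

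First, I would note that $\Cycle(z,\vect{u})$ and $\Line(z,\vect{u})$ are analytic on the open unit disk for $\vect{u}$ in a neighborhood of $\vect{1}$ (their only singularities, coming from $\log(1/(1-z))$ and $1/(1-z)$, are located at $z=1$). Thus Cauchy's coefficient formula applied on the circle of radius $\zeta \in (0,1)$ yields
\[
[z^{n_2}]\, e^{\Cycle(z,\vect{u})}\Line(z,\vect{u})^{n_1/2} = \frac{1}{2\pi i}\oint_{|z|=\zeta}\frac{e^{\Cycle(z,\vect{u})}\Line(z,\vect{u})^{n_1/2}}{z^{n_2+1}}\,dz.
\]
Parametrizing $z=\zeta e^{i\theta}$, so that $dz = i\zeta e^{i\theta}\,d\theta$ and $z^{n_2+1} = \zeta^{n_2+1} e^{i(n_2+1)\theta}$, this simplifies to
\[
[z^{n_2}]\, e^{\Cycle(z,\vect{u})}\Line(z,\vect{u})^{n_1/2} = \frac{1}{2\pi \zeta^{n_2}} \int_{-\pi}^{\pi} e^{\Cycle(\zeta e^{i\theta},\vect{u})}\Line(\zeta e^{i\theta},\vect{u})^{n_1/2} e^{-i n_2 \theta}\,d\theta.
\]

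Second, I would factor $\Line(\zeta,\vect{u})^{n_1/2}$ out of the integrand by writing
\[
\Line(\zeta e^{i\theta},\vect{u})^{n_1/2} = \Line(\zeta,\vect{u})^{n_1/2}\exp\!\left(-\frac{n_1}{2}\bigl[\log\Line(\zeta,\vect{u}) - \log\Line(\zeta e^{i\theta},\vect{u})\bigr]\right),
\]
and absorb $e^{-in_2\theta}$ into the same exponential via $n_2 = \alpha n_1/2$ (equivalently, $\alpha = 2n_2/n_1$), which gives $-in_2\theta = -i\alpha\theta\,n_1/2$. The combined exponent is then exactly $-(n_1/2)\phi(\theta,\vect{u})$, while the remaining factor $e^{\Cycle(\zeta e^{i\theta},\vect{u})}$ is precisely $A(\theta,\vect{u})$. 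Multiplying through by $v_{n_1,n_2}$ as in Lemma \ref{th:Gexact} produces the claimed identity.

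There is no substantive obstacle: the argument is a routine reformulation once Cauchy's formula is applied. The only delicate point is interpretive, namely the choice of convention for $\alpha$ inside the integrand. It is most naturally read here as $\alpha = 2n_2/n_1$, so that the identity is exact; whenever $\alpha$ is prescribed externally as in Theorem \ref{thm:main}, where $n_2 = \lfloor \alpha n_1/2\rfloor$, a discrepancy of order $1/n_1$ appears in $\phi$, which will be inconsequential for the saddle-point analysis carried out in Lemma \ref{th:Gapprox}.
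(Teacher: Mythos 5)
Your proof is correct and follows essentially the same approach as the paper: Cauchy's coefficient formula on the circle of radius $\zeta$, followed by an algebraic rearrangement of the integrand into the Laplace form $A(\theta,\vect{u})\,e^{-\phi(\theta,\vect{u})\,n_1/2}$. Your closing remark about the convention $\alpha = 2n_2/n_1$ is perceptive and correct: the corollary is an exact identity under that reading, and the rounding $n_2 = \lfloor \alpha n_1/2\rfloor$ only introduces an $O(1/n_1)$ perturbation that is immaterial in the later asymptotic analysis.
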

\begin{proof}
  For every $\vect{u}$, the generating function $G(z,\vect{u})$ has radius of 
  convergence $1$.
  We rewrite the coefficient extraction of Equation~\eqref{eq:Gexact}
  as a Cauchy integral on a circle of radius $\zeta\in (0,1)$,
  whose value will be adjusted later
  \begin{align*} \label{eq:GCauchy}
    G_{n_1, n_2} (\vect{u}) 
    &=
      \frac{v_{n_1, n_2}}{2 \pi} 
      \int_{- \pi}^{\pi} 
      \exp \left( \Cycle(\zeta e^{i \theta}, \vect{u}) \right)
      \frac{\Line(\zeta e^{i \theta}, \vect{u})^{n_1/2}}{(\zeta e^{i \theta})^{n_2}}
      d \theta,
    \\ &=
      \frac{v_{n_1, n_2}}{2 \pi}
      \frac{\Line(\zeta, \vect{u})^{n_1/2}}{\zeta^{n_2}} 
      \int_{- \pi}^{\pi} 
      A(\theta, \vect{u})
      e^{- \phi( \theta, \vect{u}) n_1/2}
      d \theta,
  \end{align*}
  with $\phi$ and $A$ defined as in the lemma.
\end{proof}

The Laplace method requires to locate the minimum
of the function $\theta \mapsto \phi(\theta, \vect{u})$ 
and the behavior of $A$ and $\phi$ in its vicinity.
This information is derived in the following lemma.

\begin{lemma} \label{th:Gexact2}
Let $\alpha>0$, $n_1$ be an even number
and $n_2$ the closest integer to $\alpha n_1/2$.
Let $\zeta$ denote the unique solution in $(0,1)$ of the equation
\begin{equation} \label{eq:zeta}
  \zeta \partial_z \log ( \Line(\zeta, \vect{u}) ) = \alpha.
\end{equation}
%
There exists a neighborhood $V \subset \RR_{> 0}^q$ of $\vect{1}$,
such that the functions $\phi$ and $A$ satisfy the following properties
for $\theta$ in a complex neighborhood of $0$:
\begin{enumerate}
\item \label{item:taylor}
uniformly for $\vect{u} \in V$, it holds that
\begin{align*}
  \phi(\theta, \vect{u}) 
  &= 
  \partial_{\theta}^2 \phi(0, \vect{u}) \frac{\theta^2}{2} + \bigO_V(\theta^3),
  \\
  A(\theta, \vect{u})
  &=
  A(0, \vect{u}) + \bigO_V(\theta),
\end{align*}
\item \label{item:nocancellation}
for all $\vect{u}\in V$, we have $\partial_{\theta}^2 \phi(0, \vect{u})>0$, and $A(0, \vect{u})\ne 0$,
\item \label{item:positiverealpart}
for all $\vect{u}\in V$, the real part of $\phi(\theta, \vect{u})$ is non-negative, 
$\real(\phi(\theta, \vect{u})) \geq 0$, with equality only at $\theta = 0$.
\end{enumerate}
\end{lemma}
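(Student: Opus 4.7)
The plan is to show that $\theta = 0$ is a non-degenerate saddle point of $\theta \mapsto \phi(\theta, \vect{u})$, uniformly in $\vect{u}$ in a small neighborhood $V \subset \RR_{>0}^q$ of $\vect{1}$. The first step is to verify that the saddle-point equation~\eqref{eq:zeta} defines $\zeta = \zeta(\vect{u})$ as an analytic function of $\vect{u}$ taking values in $(0,1)$ near $\vect{1}$. At $\vect{u} = \vect{1}$ the equation reduces to $\zeta/(1-\zeta) = \alpha$, whose unique positive solution is $\zeta = \alpha/(1+\alpha) \in (0,1)$; since the derivative $\partial_\zeta\bigl(\zeta \partial_z \log \Line(\zeta, \vect{1})\bigr) = 1/(1-\zeta)^2$ does not vanish at that point, the implicit function theorem produces an analytic extension $\zeta(\vect{u})$. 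After shrinking $V$, I may assume $\zeta(\vect{u})$ remains in a fixed compact subinterval of $(0,1)$, so that $\phi$ and $A$ are jointly analytic in $(\theta, \vect{u})$ on an open set containing $\{0\} \times V$; this is what makes the Landau bounds in item~\ref{item:taylor} uniform once the low-order derivatives have been identified.

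For items~\ref{item:taylor} and~\ref{item:nocancellation}, I would then compute the first two $\theta$-derivatives of $\phi$ directly. The value $\phi(0, \vect{u}) = 0$ is immediate, and
\[
  \partial_\theta \phi(0, \vect{u}) = -i\zeta \partial_z \log \Line(\zeta, \vect{u}) + i\alpha
\]
vanishes by the very defining equation~\eqref{eq:zeta} of $\zeta$. One more differentiation yields
\[
  \partial_\theta^2 \phi(0, \vect{u}) = \zeta \partial_z \log \Line(\zeta, \vect{u}) + \zeta^2 \partial_z^2 \log \Line(\zeta, \vect{u}) = \alpha + \zeta^2 \partial_z^2 \log \Line(\zeta, \vect{u}).
\]
At $\vect{u} = \vect{1}$ this specializes to $\alpha + \alpha^2 = \alpha(1+\alpha) > 0$, so by continuity $\partial_\theta^2 \phi(0, \vect{u}) > 0$ holds throughout $V$ after a further shrinking. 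The statement $A(0, \vect{u}) = \exp(\Cycle(\zeta(\vect{u}), \vect{u})) \ne 0$ is trivial since it is an exponential.

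For item~\ref{item:positiverealpart}, the key observation is that $\Line(z, \vect{u}) = \sum_{n \ge 0} c_n(\vect{u}) z^n$ has \emph{positive} coefficients whenever $\vect{u} \in \RR_{>0}^q$: explicitly $c_n(\vect{u}) = u_{n+2}$ for $0 \le n \le q-2$ and $c_n(\vect{u}) = 1$ for $n \ge q-1$. Since $\zeta > 0$, the triangle inequality yields $|\Line(\zeta e^{i\theta}, \vect{u})| \le \Line(\zeta, \vect{u})$, with equality only when all the nonzero terms $c_n(\vect{u}) \zeta^n e^{in\theta}$ share a common argument; because $c_0, c_1 > 0$ this forces $e^{i\theta} = 1$, hence $\theta = 0$ within $[-\pi, \pi]$. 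Taking logarithms and noting that $\real(i\alpha\theta) = 0$ gives
\[
  \real \phi(\theta, \vect{u}) = \log \Line(\zeta, \vect{u}) - \log |\Line(\zeta e^{i\theta}, \vect{u})| \;\ge\; 0,
\]
with equality only at $\theta = 0$.

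The hard part is bookkeeping rather than conceptual difficulty: the neighborhood $V$ must be chosen small enough that simultaneously $\zeta(\vect{u})$ stays in a compact subinterval of $(0,1)$, the leading coefficients $c_0(\vect{u})$ and $c_1(\vect{u})$ of $\Line(\cdot, \vect{u})$ stay bounded away from $0$, and $\partial_\theta^2 \phi(0, \vect{u})$ stays bounded away from $0$. With this uniform choice, all Taylor-remainder bounds become uniform through the standard Cauchy estimate applied on a fixed polydisc that avoids the singularity of $\Line$.
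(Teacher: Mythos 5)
Your approach is essentially the paper's for items~\ref{item:taylor} and~\ref{item:nocancellation}: solve the saddle-point equation \eqref{eq:zeta} via the implicit function theorem around $\vect{u}=\vect{1}$, check $\partial_\zeta$ of the left-hand side is $(1-\zeta)^{-2}=(1+\alpha)^2\neq 0$, then observe that joint analyticity (equivalently, boundedness of $\partial_\theta^3\phi$ near $(0,\vect{1})$) gives uniform Taylor remainders, and use continuity plus $\partial_\theta^2\phi(0,\vect{1})=\alpha(1+\alpha)>0$, $A(0,\vect{1})=e^{\Cycle(\zo,\vect{1})}\neq 0$ to clinch item~\ref{item:nocancellation}. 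Your explicit formula $\partial_\theta^2\phi(0,\vect{u}) = \alpha + \zeta^2\partial_z^2\log\Line(\zeta,\vect{u})$ is a useful intermediate step the paper skips but states the same endpoint value.

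Where you go beyond the paper is item~\ref{item:positiverealpart}: the paper's proof of this lemma does not actually address the sign of $\real\phi$ at all, whereas you give the correct argument, namely that $\Line(\cdot,\vect{u})$ has strictly positive Taylor coefficients $c_n(\vect{u})$ for $\vect{u}\in\RR_{>0}^q$, so the triangle inequality forces $|\Line(\zeta e^{i\theta},\vect{u})|\le\Line(\zeta,\vect{u})$ with equality only when all terms are aligned, which (since $c_0,c_1>0$) forces $e^{i\theta}=1$. This is the one nontrivial piece of the lemma and you supply it. Your argument in fact yields the inequality on all of $[-\pi,\pi]$, which is what is actually needed downstream in Corollary~\ref{th:Gapprox}, not just on a small interval.

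Two small remarks. First, the lemma statement refers to ``the unique solution $\zeta\in(0,1)$,'' and the paper establishes global uniqueness by noting that $z\mapsto z\,\partial_z\log\Line(z,\vect{u})$ has positive coefficients, hence is strictly increasing on $(0,1)$ with limits $0$ and $+\infty$; your proposal only produces a local analytic branch via the implicit function theorem. This is easy to repair (for instance with the same monotonicity observation you already have in hand for item~\ref{item:positiverealpart}), but as written the proposal does not justify the word ``unique.'' Second, when invoking $\real\log = \log|\cdot|$ you are implicitly using that $\Line(\zeta e^{i\theta},\vect{u})\neq 0$; this holds for $\vect{u}$ sufficiently close to $\vect{1}$ by continuity since $\Line(z,\vect{1})=1/(1-z)$ is zero-free, and is worth a word when you shrink $V$.
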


Before proceeding to the proof, observe that the value of $\zeta$ in the statement actually 
depends on $\alpha$ and $\vect{u}$. When needed, we shall write $\zu$ instead of $\zeta$ 
to avoid any ambiguity.

\begin{proof}
By definition, $\phi(0,\vect{u})=0$.
We choose $\zeta$ such that
\[
  \partial_{\theta} \phi(\zeta, \vect{u}) = 0,
\]
which is equivalent with Equation~\eqref{eq:zeta}.
A simple computation reduces this last expression to
\begin{equation} \label{eq:zetalong}
  \zeta 
  \frac{1 + (1-\zeta)^2 \sum_{j=3}^q (u_j - 1) (j-2) \zeta^{j-3}}
    {1-\zeta + (1-\zeta)^2 \sum_{j=2}^q (u_j-1) \zeta^{j-2}}
  = \alpha.
\end{equation}
In particular, when $\vect{u} = \vect{1}$, we have
\[
  \zo = \frac{\alpha}{1+\alpha}.
\]

When the components $u_2,u_3,\dots, u_q$ of $\vect{u}$ are positive numbers,
the analytic function 
\[
  z \mapsto z \frac{\partial_z \Line(z, \vect{u})}{\Line(z, \vect{u})}
\]
has positive coefficients, so it is strictly increasing.
Furthermore, using Expression~\eqref{eq:zetalong},
we see that this function tends to $0$ (resp.\ infinity)
when $z$ goes to $0$ (resp.\ $1$) along the real axis.
Therefore, for any positive real numbers $\alpha$ and $u_2, u_3,\dots, u_q$,
Equation~\eqref{eq:zeta} has a unique solution~$\zeta$ in $(0,1)$.
In particular, for $\vect{u} = \vect{1}$, Equation~\eqref{eq:zetalong} becomes
$\zo/(1-\zo) = \alpha,$ which implies that
\[
  \zo = \frac{\alpha}{1+\alpha}.
\]

Equation~\eqref{eq:zeta} defines $\zeta$ implicitly as a function of $\alpha$ and $\vect{u}$,
and has a solution $\zo$ for $\vect{u} = \vect{1}$.
Furthermore, the derivative with respect to $\zeta$
of the left-hand side of Equation~\eqref{eq:zeta}
does not vanish at $\vect{u} = \vect{1}$, since
\[
  \left.
  \partial_\zeta 
  \left( \zeta 
    \frac{\partial_z \Line(\zeta, \vect{u})}
      {\Line(\zeta, \vect{u})} 
  \right)\right|_{\vect{u} = \vect{1}}
  =
  \left.
  \partial
  \left( \zeta 
    \frac{\partial \left( (1 - \zeta)^{-1} \right)}
      {(1-\zeta)^{-1}} 
  \right)\right|_{\zeta = \zeta_\vect{1}}
  =
  (1 + \alpha)^2.
\]
Therefore, according to the Implicit Function Theorem,
for all positive $\alpha$, 
there exists a neighborhood of $\vect{1}$ on which the function $\vect{u} \mapsto \zeta$
is continuous and thus close to $\zo$.

By continuity, for any $\alpha>0$, 
there exists a neighborhood of $(0,\vect{1})$ where the function
\[
  (\theta, \vect{u}) \mapsto |\partial_{\theta}^3 \phi(\theta, \vect{u})|
\]
is bounded.
Using Taylor's Theorem, we conclude that for every $\alpha>0$,
there exists a neighborhood $V$ of $\vect{1}$ such that
\begin{align*}
  \phi(\theta, \vect{u}) &= \partial_{\theta}^2 \phi(0, \vect{u}) \frac{\theta^2}{2} + \bigO_V(\theta^3),\\
  A(\theta, \vect{u}) &= A(0, \vect{u}) + \bigO_V(\theta).
\end{align*}
Since $\partial_{\theta}^2 \phi(0, \vect{1})$ and $A(0, \vect{1})$
are non-zero (the first one is equal to $\alpha (1+\alpha)$, the second an exponential),
by a continuity argument, we can choose $V$ small enough to ensure that
$\partial_{\theta}^2 \phi(0, \vect{u})$ is positive 
and $A(0, \vect{u})$ does not cancel.
\end{proof}

  \section{Asymptotic extraction of the coefficients}

In this section, we obtain the asymptotics for the coefficients of the generating function 
$G_{n_1,n_2}(\vect{u})$. The result relies on the following technical lemma. The idea is 
very classical, but we coud not find a reference to this multidimensional version and we 
include a proof for the sake of completeness. 

\begin{lemma} \label{th:fourierlaplace}
  Let us consider a real vector $\vect{u}_0$ in $\RR^d$,
  a real neighborhood $V \subset \RR^q$ of $\vect{u}_0$
  and $\epsilon>0$.
  Let $A(\theta, \vect{u})$ and $\phi(\theta, \vect{u})$
  denote two continuous complex functions
  satisfying the following properties:
  \begin{enumerate}
  \item
  for all $\vect{u}\in V$, the functions
  $\theta \mapsto A(\theta, \vect{u})$ and
  $\theta \mapsto \phi(\theta, \vect{u})$
  are analytic at the origin and have a radius of convergence
  greater than $\epsilon$,
  \item \label{item:uniftaylor}
  uniformly for $\vect{u} \in V$,
  \begin{align*}
    \phi(\theta, \vect{u}) 
    &= 
    \partial_{\theta}^2 \phi(0, \vect{u}) \frac{\theta^2}{2} + \bigO_V(\theta^3),
    \\
    A(\theta, \vect{u})
    &=
    A(0, \vect{u}) + \bigO_V(\theta),
  \end{align*}
  \item \label{item:nonzero}
  for all $\vect{u}\in V$, we have $\partial_{\theta}^2 \phi(0, \vect{u})>0$
   and $A(0, \vect{u})\neq 0$,
  \item
  for all $\vect{u}\in V$, and $\theta\in [-\epsilon, \epsilon]$, 
  the real part of $\phi(\theta, \vect{u})$ is non-negative, $\real(\phi(\theta, \vect{u})) \geq 0$,
  with equality only at $\theta = 0$.
  \end{enumerate}
  Then there exists a neighborhood $W \subset V$ of $\vect{u}_0$ where
  \[
    \int_{-\epsilon}^{\epsilon} A(\theta, \vect{u}) e^{- n \phi(\theta, \vect{u})} d \theta
    = \frac{\sqrt{2 \pi} A(0, \vect{u})}{\sqrt{n \partial_{\theta}^2 \phi(0, \vect{u})}}
    (1 + \bigO_W(n^{-1/5})).
  \]
\end{lemma}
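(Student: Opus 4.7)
The plan is a uniform-in-$\vect{u}$ Laplace method. I would split the integral at $\delta_n := n^{-2/5}$, a choice tailored to the claimed relative error: on the inner window $[-\delta_n,\delta_n]$ the cubic Taylor remainder of $\phi$ contributes at most $n\delta_n^3 = n^{-1/5}$, while outside the window the tail is controlled by a Gaussian of scale $n\delta_n^2=n^{1/5}$ and is therefore exponentially small.

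Before splitting, I would shrink $V$ to a neighborhood $W\subset V$ of $\vect{u}_0$ on which, by continuity and hypothesis~\ref{item:nonzero}, both $\sigma^2(\vect{u}):=\partial_\theta^2 \phi(0, \vect{u})\ge c_0>0$ and $|A(0,\vect{u})|\ge c_1>0$. Combining hypothesis~\ref{item:uniftaylor} for small $|\theta|$ (where $\real\phi\ge \sigma^2(\vect{u})\theta^2/4$ after absorbing the cubic error) with hypothesis~4 and a compactness argument on $\overline{W}$ for $|\theta|$ bounded away from $0$, I would then obtain a uniform lower bound $\real\phi(\theta,\vect{u})\ge c\,\theta^2$ valid for all $\vect{u}\in W$ and $|\theta|\le \epsilon$. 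Combined with a uniform bound on $|A|$, this yields the tail estimate
\[
\Bigl|\int_{\delta_n\le|\theta|\le\epsilon} A(\theta,\vect{u})e^{-n\phi(\theta,\vect{u})}d\theta\Bigr| \le 2\epsilon\sup_V|A|\cdot e^{-cn\delta_n^2}=\bigO_W(e^{-cn^{1/5}}).
\]

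For the inner window, write $\phi(\theta,\vect{u})=\sigma^2(\vect{u})\theta^2/2+R(\theta,\vect{u})$ with $|R|\le K|\theta|^3$ uniformly on $W$ by hypothesis~\ref{item:uniftaylor}. For $|\theta|\le\delta_n$ one has $n|R|\le Kn^{-1/5}$, so $e^{-nR(\theta,\vect{u})}=1+\bigO_W(n^{-1/5})$. Using also $A(\theta,\vect{u})=A(0,\vect{u})+\bigO_W(\theta)$, the inner integral equals
\[
A(0,\vect{u})\int_{-\delta_n}^{\delta_n}e^{-n\sigma^2(\vect{u})\theta^2/2}d\theta \cdot \bigl(1+\bigO_W(n^{-1/5})\bigr),
\]
where the $\bigO_W(\theta)$ contribution from $A$ integrates to $\bigO(1/n)$ and is absorbed. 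Extending the Gaussian to $\RR$ costs only an extra $\bigO_W(e^{-c_0 n^{1/5}/2})$ and produces $\sqrt{2\pi/(n\sigma^2(\vect{u}))}$; dividing by the main term $A(0,\vect{u})\sqrt{2\pi/(n\sigma^2(\vect{u}))}$, which is uniformly bounded away from $0$ on $W$, folds all error terms into the relative error $\bigO_W(n^{-1/5})$.

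The only genuine difficulty is the bookkeeping of the uniformity in $\vect{u}$. Hypothesis~\ref{item:uniftaylor} was formulated precisely so that the Taylor constants do not depend on $\vect{u}$, so the remaining nuisance is checking that the non-vanishing and lower-bound properties of $\sigma^2$, $A(0,\cdot)$, and $\real \phi$ away from $0$ can be made uniform; a shrinking to a compact $\overline{W}$ and routine continuity arguments suffice.
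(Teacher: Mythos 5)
Your proposal is correct and follows the same scheme as the paper's proof: split at $n^{-2/5}$, bound the tail using the positive real part of $\phi$ together with a compactness argument, approximate the integrand in the inner window using the uniform Taylor expansion, and extend to the full Gaussian integral at exponentially small cost. Your explicit statement of a uniform quadratic lower bound $\real\phi(\theta,\vect{u})\ge c\theta^2$ on all of $[-\epsilon,\epsilon]$ (obtained by stitching the Taylor bound near $0$ together with compactness away from $0$) is a slightly more careful rendering of the tail estimate, which the paper presents more tersely, but the underlying argument is identical.
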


\begin{proof}
  We follow the same steps as in the proof 
  of the \emph{Large Power Theorem}~\cite[Theorem VIII.8 page 587]{FS09},
  but all the intermediate results need to be uniform
  with respect to $\vect{u}$.
  First, we reduce the domain of integration,
  then the integrand is approximated by a Gaussian integrand
  and finally the domain of integration is extended to $\RR$,
  and we replace the classic Gaussian integral by its value.

  \medskip
  \noindent\textsc{Reduction of the domain of integration.}\ 
  Let $C \subset V$ denote a compact set that contains a neighborhood of $\vect{u}_0$.
  We first show that the main contribution of the integral
  \[
    I_n(\vect{u}) = \int_{- \epsilon}^{\epsilon} A(\theta, \vect{u}) e^{- n \phi(\theta, \vect{u})} d \theta
  \]
  comes from the vicinity of $\theta = 0$.
  Specifically, we introduce the integral
  \[
   \tilde{I}_{n}(\vect{u}) = \int_{- n^{-2/5}}^{n^{-2/5}} A(\theta, \vect{u}) e^{- n \phi(\theta, \vect{u})} d \theta
  \]
  and prove that there exists a constant $K$ independent of $\vect{u}$ such that
  \begin{equation} \label{eq:reducedomain}
    I_n(\vect{u}) 
    = 
    \tilde{I}_{n}(\vect{u})
    + \bigO_C \left( e^{- K n^{1/5}} \right).
  \end{equation}
  We start with the inequality
  \begin{equation} \label{eq:dominateintegral}
    \left| I_n(\vect{u}) - \tilde{I}_{n}(\vect{u}) \right|
    \leq
    \int_{n^{-2/5} \leq | \theta | \leq \epsilon}
    \left| A(\theta, \vect{u}) e^{- n \phi(\theta, \vect{u})} \right| d \theta
  \end{equation}
  where the right-hand side is at most
  \[
    2 \epsilon
    \sup_{\substack{ |\theta| \leq \epsilon \\ \vect{u} \in C}} | A(\theta, \vect{u}) | 
    \exp \left(
      - n \inf_{\substack{n^{-2/5} \leq |\theta| \leq \epsilon \\ \vect{u} \in C}} 
        \real \left( \phi(\theta, \vect{u}) \right) 
    \right).
  \]
  Since the function $(\theta, \vect{u}) \mapsto |A(\theta, \vect{u})|$ is continuous, it reaches a finite maximum
  in the compact set $[-\epsilon, \epsilon] \times C$, so
  \[
    2 \epsilon
    \sup_{\substack{ |\theta| \leq \epsilon \\ \vect{u} \in C}} | A(\theta, \vect{u}) |
    = \bigO_C(1).
  \]
  According to Assumptions~\ref{item:uniftaylor} and~\ref{item:nonzero},
  \[
    \real ( \phi(\theta, \vect{u}) ) = \partial_\theta^2 \phi(0, \vect{u}) \frac{\theta^2}{2} + \bigO_C(\theta^3),
  \]
  so
  \begin{align*}
    n \inf_{\substack{n^{-2/5} \leq |\theta| \leq \epsilon \\ \vect{u} \in C}} 
    \real \left( \phi(\theta, \vect{u}) \right)
    &=
    \left( 
      \frac{1}{2} 
      \inf_{\vect{u} \in C} \partial_\theta^2 \phi(0, \vect{u}) 
    \right)
    n^{1/5}
    + \bigO_C(n^{-1/5}).
  \end{align*}
  We conclude that
  \[
    \int_{n^{-2/5} \leq | \theta | \leq \epsilon}
    \left| A(\theta, \vect{u}) e^{- n \phi(\theta, \vect{u})} \right| d \theta
    =
    \bigO_C \left( e^{- K n^{1/5}} \right)
  \]
  where $K = \frac{1}{2} \inf_{\vect{u} \in C} \partial_\theta^2 \phi(0, \vect{u})>0$.
  Combined with Equation~\eqref{eq:dominateintegral}, this last result
  proves Equality~\eqref{eq:reducedomain}.

  \medskip
  \noindent\textsc{Approximation of the integrand.}\ 
  We inject the expressions of $A(\theta, \vect{u})$ and $\phi(\theta, \vect{u})$
  from Assumption~\ref{item:uniftaylor} in the definition of $\tilde{I}_n(\vect{u})$
  \[
    \tilde{I}_n(\vect{u}) = 
    \int_{-n^{-2/5}}^{n^{-2/5}}
    (A(0,\vect{u}) + \bigO_C(\theta))
    e^{-n \partial_\theta^2 \phi(0, \vect{u}) \theta^2/2+ n \bigO_C(\theta^3)}
    d \theta.
  \]
  Uniformly with respect to $\vect{u}\in C$ 
  and $\theta\in [-n^{-2/5}, n^{-2/5}]$, we have
  \begin{align*}
    e^{n \bigO_C(\theta^3)} 
    &= 
    1 + \bigO_{\vect{u} \in C, |\theta| \leq n^{-2/5}} (n^{-1/5}),
    \\
    A(0,\vect{u}) + \bigO_C(\theta)
    &=
    A(0,\vect{u}) \left(
      1 + \bigO_{\vect{u} \in C, |\theta| \leq n^{-2/5}} (n^{-2/5})
    \right).
  \end{align*}
  Remark that this property holds 
  because we reduced the domain of integration.
  We then obtain
  \[
    \tilde{I}_n(\vect{u}) = 
    \int_{-n^{-2/5}}^{n^{-2/5}}
    A(0,\vect{u})
    e^{-n \partial_\theta^2 \phi(0, \vect{u}) \theta^2/2}
    d \theta
    \left(
      1 + \bigO_C(n^{-1/5})
    \right),
  \]
  which becomes
  \begin{equation} \label{eq:almostdone}
    \tilde{I}_n(\vect{u}) = 
    \frac{A(0,\vect{u})}{\sqrt{n \partial_\theta^2 \phi(0, \vect{u})}}
    \int_{-n^{1/10}}^{n^{1/10}}
    e^{- t^2/2}
    d t
    \left(
      1 + \bigO_C(n^{-1/5})
    \right)
  \end{equation}
  after the linear change of variable 
  \[
    t = \theta \sqrt{n \partial_\theta^2 \phi(0, \vect{u})}.
  \]

  \medskip\noindent\textsc{Gaussian integral.}\ 
  To conclude the proof, we quickly prove the classic fact
  \[
    \int_{-n^{1/10}}^{n^{1/10}} e^{-t^2/2} d t
    =
    \sqrt{2 \pi}
    \left( 1 + \bigO(e^{-n^{1/5}/2}) \right).
  \]
  Indeed, the complete Gaussian integral is $\int_{-\infty}^{\infty} e^{-t^2/2} d t = \sqrt{2 \pi}$
  while
  \[
    \left| 
      \int_{-\infty}^{\infty} e^{-t^2/2} d t
      - \int_{-n^{1/10}}^{n^{1/10}} e^{-t^2/2} d t
    \right| 
    \leq 
    2 \int_{n^{1/10}}^{\infty} t e^{-t^2/2} d t = 2 e^{-n^{1/5}/2}
  \]
  as soon as $n^{1/10}$ is greater than~$1$.
  Injecting this relation in Equations~\eqref{eq:almostdone}
  and~\eqref{eq:reducedomain}, we obtain
  \[
    I_n(\vect{u}) 
    = 
    \frac{\sqrt{2\pi} A(0)}{\sqrt{n \partial_\theta^2 \phi(0, \vect{u})}}
    \left( 1 + \bigO_C(n^{-1/5}) \right)
    \left( 1 + \bigO_C(e^{-n^{1/5}/2}) \right)
    + \bigO_C(e^{- K n^{1/5}})
  \]
  which concludes the proof.
\end{proof}

The error term of the previous lemma
could be improved up to $\bigO_W(n^{-1})$,
but this would require more work.
Actually, in the following we will simply use a $\smallo_W(1)$
error term, which is sufficient for our purpose
and reduces the notations.


Combining Corollary~\ref{th:integralform} 
and Lemmas~\ref{th:Gexact2} and~\ref{th:fourierlaplace},
we obtain the asymptotics of $G_{n_1, n_2}(\vect{u})$
for $u_2,u_3,\dots, u_q$ in small but fixed real neighborhood of $1$, $n_1$ even and   
$2 n_2/n_1$ close to a fixed positive constant $\alpha$.

\begin{corollary} \label{th:Gapprox}
  With the notations of Corollary~\ref{th:integralform} and Lemma~\ref{th:Gexact2},
  for all $\alpha>0$, there is a neighborhood $W$ of $\vect{1}$
  such that, when $n_1$ is even and $n_2 = \lfloor \alpha n_1/2 \rfloor$,
  \[
    G_{n_1, n_2}(\vect{u})
    =
    \frac{v_{n_1, n_2}}{\sqrt{2 \pi}}
    \frac{A(0,\vect{u})}{\sqrt{\partial_\theta^2 \phi(0, \vect{u}) n_1/2}}
    \frac{\Line(\zeta, \vect{u})^{n_1/2}}{\zeta^{n_2}}
    \left( 1 + \smallo_W(1) \right).
  \]
\end{corollary}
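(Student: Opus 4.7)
The plan is to apply Lemma~\ref{th:fourierlaplace} with $n = n_1/2$ to the integral representation in Corollary~\ref{th:integralform}, evaluated at $\zeta = \zu$. The three local properties of $A$ and $\phi$ established in Lemma~\ref{th:Gexact2} (Taylor expansion, non-vanishing of the leading coefficients, and non-negative real part with equality only at $0$) are precisely assumptions (2)--(4) of Lemma~\ref{th:fourierlaplace}. Analyticity of $A$ and $\phi$ at the origin with radius of convergence bounded below by some $\epsilon>0$ follows from the rational/logarithmic form of $\Line$ and $\Cycle$, together with $\zu \in (0,1)$. Hence, for a suitably small $\epsilon>0$ and a neighborhood $W \subset V$ of $\vect{1}$,
\[
\int_{-\epsilon}^{\epsilon} A(\theta, \vect{u}) e^{-\phi(\theta, \vect{u}) n_1/2} \, d\theta = \frac{\sqrt{2\pi}\, A(0, \vect{u})}{\sqrt{(n_1/2)\, \partial_\theta^2 \phi(0, \vect{u})}} (1 + \smallo_W(1)),
\]
where the weaker $\smallo_W(1)$ replaces the $\bigO_W(n^{-1/5})$ of Lemma~\ref{th:fourierlaplace}, as noted in the remark after it.

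The remaining task is to control the ``tails'' $\epsilon \le |\theta| \le \pi$ in the integral $\int_{-\pi}^{\pi}$ of Corollary~\ref{th:integralform}. For this I would show there exists $c>0$ with $\real \phi(\theta, \vect{u}) \ge c$ uniformly for $\vect{u} \in W$ and $\epsilon \le |\theta| \le \pi$. From the definition of $\phi$,
\[
\real \phi(\theta, \vect{u}) = \log \Line(\zu, \vect{u}) - \log |\Line(\zu e^{i\theta}, \vect{u})|,
\]
so the bound reduces to a uniform strict inequality $|\Line(\zu e^{i\theta}, \vect{u})| < \Line(\zu, \vect{u})$. For $\vect{u} = \vect{1}$, $\Line(z, \vect{1}) = (1-z)^{-1}$ has strictly positive Taylor coefficients, so the triangle inequality gives strict inequality for $\theta \neq 0$; compactness of the tail region yields a uniform positive gap. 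By continuity of $\Line$ in $\vect{u}$ and of $\zu$ in $\vect{u}$ (both ensured by Lemma~\ref{th:Gexact2}), this gap persists after possibly shrinking $W$. The tail contribution is then at most $2\pi\, \sup|A|\, e^{-c n_1/2}$, which is exponentially small and absorbed into the $1 + \smallo_W(1)$ factor.

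Substituting the resulting asymptotic for the full integral into Corollary~\ref{th:integralform} and collecting the prefactor $v_{n_1, n_2} \Line(\zu, \vect{u})^{n_1/2}/(2\pi\, \zu^{n_2})$ yields the claimed formula. The discrepancy $n_2 - \alpha n_1/2 \in [-1,1]$ only introduces a bounded factor $\zu^{O(1)}$ which, by continuity of $\zu$ on $W$, contributes another $1 + \smallo_W(1)$ term. The main obstacle is genuinely the uniform tail estimate: one must verify that the classical ``Cauchy circle'' argument for $(1-z)^{-1}$ transfers uniformly to the analytic perturbation $\Line(\cdot, \vect{u})$ and to the moving saddle $\zu$ as $\vect{u}$ varies in a neighborhood of $\vect{1}$; everything else is direct substitution.
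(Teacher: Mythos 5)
Your proof is correct and follows the route the paper intends: it applies Lemma~\ref{th:fourierlaplace} to the integral representation of Corollary~\ref{th:integralform}, with the hypotheses supplied by Lemma~\ref{th:Gexact2}. The paper leaves this corollary without a written proof (it merely says the three results combine), so the two extra points you address---a uniform tail bound on $\epsilon\le|\theta|\le\pi$ via positivity of the coefficients of $\Line(\cdot,\vect{u})$ to bridge the $\int_{-\epsilon}^{\epsilon}$ of Lemma~\ref{th:fourierlaplace} with the $\int_{-\pi}^{\pi}$ of the Cauchy integral, and the bounded discrepancy $n_2-\alpha n_1/2$ coming from the floor function---are genuinely needed for a complete argument, and you argue both correctly.
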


    \section{Limit distribution}

We now exploit the expression in Corollary~\ref{th:Gapprox} to obtain the limit distribution
of the vector $(U_2,U_3,\dots, U_q)$ of counts of connected components of sizes $2,3,\dots, q$ 
in a graph drawn uniformly at random from $\setg_{n_1,n_2}$.

\begin{lemma} \label{th:limitdistribution}
  Let $\alpha>0$, $t_2, \ldots, t_q>0$, $n_1$ an even integer 
  and $n_2 = \lfloor \alpha n_1/2 \rfloor$.
For all $2 \leq j \leq q$, let $U_j$ be the random variable
counting the number of connected components of size $j$
in a graph drawn uniformly from $\setg_{n_1, n_2}$,
and let $V_j$ denote the rescaled random variable
\[
  V_j = 
  \frac{1}{\sqrt{n_1/2}}
  \left(
  U_j
  - \frac{\alpha^{j-2}}{(1+\alpha)^{j-1}}
  \frac{n_1}{2}
  \right).
\]
Then the limit when $n_1$ goes to infinity 
of the multivariate Laplace transform of $V_2, \ldots, V_q$ is
\[
  \lim_{n_1 \rightarrow \infty} \LL^{(\vect{V})}_{n_1, n_2}(t_2, \ldots, t_q) 
  = 
  e^{\frac{1}{2} \vect{t} \cdot \HH(\alpha) \cdot \vect{t}}
\]
where $\HH(\alpha)$ is a positive semi-definite symmetric matrix
with rows and columns indexed from $2$ to $q$ 
(i.e. its upper-left coefficient is $\HH_{2,2}(\alpha)$)
and defined by
\[
  \HH_{i,j}(\alpha) = 
  - \frac{\alpha^{i+j-4}}{(1+\alpha)^{i+j-2}}
  \left( 1 + \frac{(i-2-\alpha)(j-2-\alpha)}{\alpha(1+\alpha)} \right) 
  + \frac{\mathbb{1}_{i=j}}{1+\alpha} \left(\frac \alpha {1+\alpha}\right)^{i-2}.
\]
As a consequence $(V_2, \ldots, V_q)$ converges in distribution to
the multivariate Gaussian $\mathcal{N}(\vect{0}, \HH(\alpha))$.
\end{lemma}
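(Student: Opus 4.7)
The plan is to show pointwise convergence of the Laplace transform
\[
  \LL^{(\vect{V})}_{n_1,n_2}(\vect{t}) = \frac{G_{n_1,n_2}(\vect{u})}{G_{n_1,n_2}(\vect{1})}\exp\Bigl(-\sqrt{n_1/2}\sum_{j=2}^q t_j\mu_j\Bigr),
\]
where $u_j = e^{t_j/\sqrt{n_1/2}}$ and $\mu_j = \alpha^{j-2}/(1+\alpha)^{j-1}$, to $\exp(\tfrac12\vect{t}\cdot\HH(\alpha)\cdot\vect{t})$ in a neighborhood of $\vect{0}$, from which convergence in distribution follows by the standard continuity argument. Applying Corollary~\ref{th:Gapprox} at $\vect{u}$ and at $\vect{1}$, the prefactor $A(0,\vect{u})/\sqrt{\partial_\theta^2\phi(0,\vect{u})}$ is continuous in $\vect{u}$, so its ratio to the same quantity at $\vect{1}$ tends to $1$ and contributes $o(1)$ to $\log\LL^{(\vect{V})}$. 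The entire analysis thus reduces to Taylor-expanding, to second order in $\vect{s}:=\vect{t}/\sqrt{n_1/2}$, the function
\[
  \Psi(\vect{u}) := \tfrac{n_1}{2}\log\Line(\zu,\vect{u}) - n_2\log\zu.
\]
Because $n_2-\alpha n_1/2 = O(1)$ while $\log\zu-\log\zo = O(\|\vect{s}\|) = O(1/\sqrt{n_1})$, replacing $n_2$ by $\alpha n_1/2$ introduces only an $o(1)$ error, so one may work with $(n_1/2) F(\vect{u})$ where $F(\vect{u}) := \log\Line(\zu,\vect{u}) - \alpha\log\zu$.

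The crucial structural simplification is that Equation~\eqref{eq:zeta} defining $\zu$, namely $\zu\partial_z\log\Line(\zu,\vect{u})=\alpha$, is exactly the first-order condition that causes the $\zu$-dependence to drop out at first order in $F$. A short calculation gives
\[
  \partial_{u_j} F(\vect{u}) = \frac{\zu^{j-2}}{\Line(\zu,\vect{u})}, \qquad \partial_{u_j} F(\vect{1}) = \mu_j.
\]
For the Hessian I would differentiate this expression in $u_i$, compute $\partial_{u_i}\zu|_{\vect{1}}$ explicitly via the implicit function theorem applied to~\eqref{eq:zeta}, and simplify using $\zo = \alpha/(1+\alpha)$ and $\Line(\zo,\vect{1})=1+\alpha$. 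The resulting identity should be
\[
  \partial_{u_i}\partial_{u_j} F(\vect{1}) = -\frac{\alpha^{i+j-4}}{(1+\alpha)^{i+j-2}}\Bigl(1 + \frac{(i-2-\alpha)(j-2-\alpha)}{\alpha(1+\alpha)}\Bigr),
\]
which is exactly the first summand of $\HH_{i,j}(\alpha)$, and is symmetric in $(i,j)$ as required.

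Finally, combining these derivatives with the expansion $u_j-1 = s_j+s_j^2/2 + O(s_j^3)$ and $(n_1/2)s_j = \sqrt{n_1/2}\,t_j$, the Taylor expansion of $(n_1/2)F$ splits into three pieces: a linear term $\sqrt{n_1/2}\sum_j \mu_j t_j$ that exactly cancels the explicit mean subtraction; a ``Kronecker'' contribution $\tfrac12\sum_j \mu_j t_j^2$ produced by the nonlinear change of variables $u_j=e^{s_j}$ acting on the first derivative, and which matches precisely the diagonal term $\mathbb{1}_{i=j}(1+\alpha)^{-1}(\alpha/(1+\alpha))^{i-2}$ of $\HH$; and the genuine Hessian term $\tfrac12\sum_{i,j}\partial_{u_i}\partial_{u_j}F(\vect{1})\,t_it_j$ supplying the off-diagonal part. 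The cubic remainder is of size $(n_1/2)\|\vect{s}\|^3 = O(n_1^{-1/2})=o(1)$, and positive semi-definiteness of $\HH(\alpha)$ is automatic from the fact that the limit is a valid moment generating function. The principal obstacle is bookkeeping: carrying out the implicit-function computation of $\partial_{u_i}\zu|_{\vect{1}}$ and the subsequent algebraic simplification to the symmetric form above, and verifying that the small errors coming from the $(1+o(1))$ factor in Corollary~\ref{th:Gapprox}, the floor in $\lfloor\alpha n_1/2\rfloor$, the cycle factor $A(0,\vect{u})$, and the cubic Taylor remainder all remain $o(1)$ uniformly on compact sets in $\vect{t}$.
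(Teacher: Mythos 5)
Your proposal is correct and follows essentially the same route as the paper: reduce to the exponent via Corollary~\ref{th:Gapprox}, use the saddle-point equation~\eqref{eq:zeta} to kill the first-order $\zeta_{\vect{u}}$-dependence, compute the Hessian of $\log\Line(\zu,\vect{u})-\alpha\log\zu$ by implicit differentiation, and identify the linear term with the mean subtraction. The only cosmetic difference is that the paper absorbs the substitution $\vect{u}=e^{\vect{t}}$ into the definition of $\chi(\vect{t})$ (so the Kronecker diagonal term appears as an extra $\mathbb{1}_{i=j}\partial_{u_i}f$ from differentiating $e^{t_j}$), whereas you keep $\vect{u}$ as the variable and extract the same diagonal correction from the expansion $u_j-1=s_j+s_j^2/2+\cdots$; both bookkeepings give the same $\HH(\alpha)$.
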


\begin{proof}
This lemma is a multivariate version of 
the \emph{Quasi-Power Theorem} of Hwang~\cite{H98},
also available in~\cite[Lemma IX.1 page 646]{FS09},
applied to a particular case. (Note however that the main point of 
Hwang's theorem is the improvement on the rate of convergence; here, 
we only use the same approach but do not try to obtain the best possible rate.)
When $\vect{t} = (t_2, \ldots, t_q)$ is a vector,
the notation $e^{\vect{t}}$
denotes the vector $(e^{t_2}, \ldots, e^{t_q})$.

Note first that the asymptotics of the Laplace transform $\LL_{n_1,n_2}^{(\vect{U})}(\vect{t})$ 
about $\vect{t}=0$ yield asymptotics for the moments of the vector $\vect{U}=(U_2,U_3,\dots, U_q)$. 
We have
\begin{equation} \label{eq:laplace}
  \LL^{(\vect{U})}_{n_1,n_2}(\vect{t})
  =
  B(\vect{t}) e^{n_1\chi(\vect{t})/2}
  \left( 1 + \smallo_W(1) \right)
\end{equation}
where $B(\vect{t})$ and $\chi(\vect{t})$ are defined by
\begin{align*}
  B(\vect{t})
  &=
  \frac{A(0, e^{\vect{t}})}
    {A(0, \vect{1})}
  \sqrt{\frac{\partial_\theta^2 \phi(0, \vect{1})}
    {\partial_\theta^2 \phi(0, e^{\vect{t}})}}
  \\
  \chi(\vect{t})
  &=
  \log \left( \frac{\Line(\zeta_{e^{\vect{t}}}, e^{\vect{t}})}{\Line(\zo, \vect{1})} \right)
  - \alpha
  \log \left( \frac{ \zeta_{e^{\vect{t}}} }{ \zo } \right)
\end{align*}
and are such that $B(\vect{0}) = 1$ and $\chi(\vect{0}) = 0$. So, for $2\le j\le q$,
\begin{align*}
\esp[U_j] 
& = \partial_{t_j} \left.\LL_{n_1,n_2}^{(\vect{U})}(\vect{t})\right|_{\vect{t}=0} 
\sim \frac{n_1}2 \partial_{t_i}\chi(\vect{0})\\
\var(U_j)
& = \left. \partial_{t_j}^2  \LL_{n_1,n_2}^{(\vect{U})}(\vect{t}) 
- (\partial_{t_j} \LL_{n_1,n_2}^{(\vect{U})}(\vect{t}))^2 \right|_{\vect{t}=0}
\sim \frac{n_1}2 \partial_{t_j} B(\vect{0}) \partial_{t_j} \chi(\vect{0}) 
+ \frac{n_1}2 \partial^2_{t_j} \chi(\vect{0}).
\end{align*}

So $\esp[U_j]\sim n_1 \partial_{t_j} \chi(\vect{0})/2$ and $\var(U_j)=O(n_1)$. So
for the limit distribution of $\vect{U}=(U_2,U_3,\dots, U_q)$, the natural rescaling involves
\[
\vect{V}=(V_j)_{2\le j\le q} 
= \left(\frac{U_j - n_1 \partial_{t_j} \chi(\vect{0})/2}{\sqrt {n_1/2}}\right)_{2\le j\le q}.
\]
It now suffices to obtain pointwise convergence of the corresponding Laplace transform
$\LL^{(\vect{V})}_{n_1,n_2}(\vect{t})=\esp[e^{\vect{t} \vect{V}}]$. 
so that, for any $\vect{t}$,
\begin{align*}
  \LL^{(\vect{V})}_{n_1,n_2}(\vect{t})
  &=
  \esp \left[ \exp\left(\sum_{j=2}^q U_j \frac{t_j}{\sqrt{n_1/2}} \right) \right]
  e^{-\sqrt{n_1/2} \nabla \chi(\vect{0}) \cdot \vect{t}} \\
  & = \LL^{(\vect{U})}_{n_1,n_2}\left( \frac{\vect{t}}{\sqrt{n_1/2}} \right)
  e^{-\sqrt{n_1/2} \nabla \chi(\vect{0}) \cdot \vect{t}}.
\end{align*}
In the following, we write $n_1=2k$, for an integer $k$, and we want asymptotics as $k\to\infty$.
However, by definition of $G_{n_1, n_2} (\vect{u})$,
\[
  \LL^{(\vect{U})}_{n_1,n_2}(\vect{t}/\sqrt{k})
  =
  \frac{G_{n_1, n_2}(e^{\vect{t}/\sqrt{k}})}
    {G_{n_1, n_2}(\vect{1})}.
\]
Now, for any fixed $\vect{t}$, for all $n_1$ large enough $\vect{t}/\sqrt{k}$ is in a 
neighborhood of $\vect{0}$, and $\vect{u} = e^{\vect{t}/\sqrt{k}}$ is in a neighborhood 
of $\vect{1}$. Thus, we can apply Lemma~\ref{th:Gapprox} to obtain the asymptotics for 
the Laplace transform $\LL^{(\vect{V})}_{n_1,n_2}(\vect{t})$, as the number of nodes tend 
to infinity, and this uniformly with respect to~$\vect{t}$ in a fixed neighborhood 
$W$ of $\vect{0}$:
\begin{equation} 
  \LL^{(\vect{U})}_{n_1,n_2}(\vect{t}/\sqrt k)
  =
  B(\vect{t}/\sqrt k) e^{k\chi(\vect{t}/\sqrt k)}
  \left( 1 + \smallo_W(1) \right).
\end{equation}

The multivariate Taylor expansion of $\chi(\vect{t})$ 
near $\vect{t} = \vect{0}$ is
\[
  \chi(\vect{t})
  =
  \nabla \chi(\vect{0}) \cdot \vect{t}
  + \frac{1}{2}
  \vect{t} \cdot \HH \cdot \vect{t}
  + \bigO(\|\vect{t}\|^3)
\]
where $\nabla \chi(\vect{0})$ and $\HH$ denote respectively the gradient
and Hessian matrix of $\chi$ at $\vect{0}$. 
Following \eqref{eq:laplace}, we are interested in asymptotics of $k\chi(\vect{t}/k)$, as $k\to\infty$:
\[
  k\chi \left( \frac{\vect{t}}{\sqrt{k}} \right)
  = 
  \sqrt{k} \nabla \chi(\vect{0}) \cdot \vect{t} 
  + \frac{1}{2}
  \vect{t} \cdot \HH \cdot \vect{t}
  + \bigO(k^{-1/2} \|\vect{t}\|^3).
\]
The uniform convergence in Equation~\eqref{eq:laplace}
allows us to apply this rescaling to the Laplace transform
of $U_2, \ldots, U_q$
\[
  \LL^{(\vect{U})}_{n_1,n_2}\left( \frac{\vect{t}}{\sqrt{k}} \right)
  =
  B\left( \frac{\vect{t}}{\sqrt{k}} \right) e^{k\chi(\vect{t}/\sqrt{k})}
  \left( 1 + \smallo_W(1) \right)
\]
Since $B$ is continuous and its value at $\vect{0}$ is $1$,
we can rewrite this expression as
\begin{equation} \label{eq:laplaceU}
  \LL^{(\vect{U})}_{n_1,n_2}\left( \frac{\vect{t}}{\sqrt{k}} \right)
  =
  e^{\sqrt{k} \nabla \chi(\vect{0}) \cdot \vect{t}}
  e^{\frac{1}{2}
  \vect{t} \cdot \HH \cdot \vect{t}}
  \left( 1 + \smallo_W(1) \right).
\end{equation}
It follows that, in a fixed neighborhood of $\vect{0}$, we have
\begin{equation}\label{eq:limit_laplace}
  \LL^{(\vect{V})}_{n_1,n_2}(\vect{t})
  =
  e^{\frac{1}{2}
  \vect{t} \cdot \HH \cdot \vect{t}}
  \left( 1 + \smallo_W(1) \right).
\end{equation}

From \eqref{eq:limit_laplace} above, in order to complete the proof of the convergence in distribution 
of $\vect{V}=(V_2,V_3,\dots, V_q)$, it suffices to verify that the right-hand side above is the 
Laplace transform of a multivariate  Gaussian, which reduces to checking that the Hessian matrix 
$\HH$ is positive semi-definite. 
To see that this is the case, it suffices to consider the 
cumulant generating function $\log \LL_{n_1,n_2}^{(\vect{V})}(\vect{t})$, which is a 
convex function for every $n_1$ and $n_2$ \cite[see, e.g.,][]{DeZe1998}. It follows that 
the quadratic form $\vect{t} \HH \vect{t}$ is convex, so that the matrix $\HH$ is positive 
semi-definite.

Finally, we turn to the evaluation of $\nabla \chi(\vect{0})$ and $\HH$. 
Remark that, although this is not explicit in the notation, both $\nabla \chi(\vect{0})$ and $\HH$ 
depend on $\alpha$.
By definition of the Gradient and the Hessian matrix, 
with the convention that rows and columns are indexed
from $2$ to $q$, the $j$th component of $\nabla \chi(\vect{t})$
is $\partial_{t_j} \chi(\vect{0})$
and the coefficient $(i,j)$ of $\HH$ is $\partial_{t_i} \partial_{t_j} \chi(\vect{0})$.
Since
\[
  \chi(\vect{t}) =
  \left.
  \left(
    \log \left(
      \frac{\Line(\zu, \vect{u})}{\Line(\zo, \vect{1})}
    \right)  
    - \alpha \log \left( \frac{\zu}{\zo} \right)
  \right)\right|_{\vect{u} = e^{\vect{t}}},
\]
we have
\begin{align*}
  \partial_{t_j} \chi(\vect{t})
  &=
  e^{t_j} \partial_{u_j} \left.\left( \log( \Line(\zu, \vect{u})) 
  - \alpha \log( \zu) \right)\right|_{\vect{u} = e^{\vect{t}}}
  \\&=
  e^{t_j} \left. \left(
    \partial_{u_j} \log( \Line(z, \vect{u}))_{|z = \zu}
    + ( \partial_{u_j} \zu ) \partial_z \log ( \Line(\zu, \vect{u}) )
    - \frac{\alpha}{\zu} \partial_{u_j} \zu
  \right)\right|_{\vect{u} = e^{\vect{t}}}.
\end{align*}
By the definition~\eqref{eq:zeta} of $\zeta_{\vect{u}}$,
\[
  \partial_z \log \left( \Line(\zu, \vect{u}) \right) - \frac{\alpha}{\zu} = 0,
\]
so
\begin{equation} \label{eq:gradiant}
  \partial_{t_j} \chi(\vect{t}) 
  = e^{t_j}
  \partial_{u_j} \left.\log( \Line(z, \vect{u}))\right|_{z = \zeta_{e^{\vect{t}}}, \vect{u} = e^{\vect{t}}}.
\end{equation}
According to the value of $\zeta_{\vect{1}}$ derived in Lemma~\ref{th:Gexact2}
and the expression of $\Line(z, \vect{u})$, we have
\begin{align*}
  \zo &= \frac{\alpha}{1+\alpha}, \\
  \Line(\zo, \vect{1}) &= 1+\alpha,\\
  \partial_{u_j} \Line(z,\vect{u}) &= z^{j-2}.
\end{align*}
It follows that the $j$th component of the Gradient $\nabla \chi(\vect{0})$ is
\[
  \partial_{t_j} \chi(\vect{0}) 
  = \frac{\partial_{u_j} \Line(\zo, \vect{u})|_{\vect{u} = \vect{1}}}{\Line(\zo, \vect{1})}
  = \frac{\alpha^{j-2}}{(1+\alpha)^{j-1}}.
\]
Now we compute the coefficient $(i,j)$ of the Hessian matrix $\HH$.
Let $f(z, \vect{u})$ denote the function
\[
  f(z, \vect{u}) 
  = \log (\Line(z, \vect{u}))
  = \log \Bigg( 
  \frac{1}{1-z} + \sum_{j=2}^q (u_j-1) z^{j-2}
  \Bigg),
\]
then by derivation of Equation~\eqref{eq:gradiant},
\[
  \partial_{t_i} \partial_{t_j} \chi(\vect{0})
  =
  ( \partial_{u_i} \zo ) \partial_z \partial_{u_j} f(\zo, \vect{1}) 
  + \partial_{u_i} \partial_{u_j} f(\zo, \vect{1})
  + \mathbb{1}_{i=j} \partial_{u_i} f(\zo,\vect{1}).
\]
Deriving Equation~\eqref{eq:zeta} with respect to $u_i$
and rearranging the terms leads to
\[
  \partial_{u_i} \zo
  =
  - \frac{ \zo \partial_{u_i} \partial_z f(\zo, \vect{1}) }
    {\partial_z f(\zo, \vect{1}) + \zo \partial_z^2 f(\zo, \vect{1})},
\]
so
\begin{equation} \label{eq:H}
  \partial_{t_i} \partial_{t_j} \chi(\vect{0})
  =
  - \frac{ \zo \partial_{u_i} \partial_z f(\zo, \vect{1}) }
    {\partial_z f(\zo, \vect{1}) + \zo \partial_z^2 f(\zo, \vect{1})}
  \partial_z \partial_{u_j} f(\zo, \vect{1})
  + \partial_{u_i} \partial_{u_j} f(\zo, \vect{1})
  + \mathbb{1}_{i=j} \partial_{u_i} f(\zo,\vect{1}),
\end{equation}
where $\mathbb{1}_{i=j}$ denotes the indicator that $i=j$.
Simple computations on the expression of $f(z,\vect{u})$ yield
\begin{align*}
  \partial_z f(\zo, \vect{1}) &= 
    1+\alpha, \\
  \partial_z^2 f(\zo, \vect{1}) &=
    (1+\alpha)^2, \\ 
  \partial_{u_i} f(\zo, \vect{1}) &= \frac{\alpha^{i-2}}{(1+\alpha)^{i-1}}\\
  \partial_{u_i} \partial_z f(\zo, \vect{1}) &= 
    \frac{\alpha^{i-3}}{(1+\alpha)^{i-2}} (i-2-\alpha), \\
  \partial_{u_i} \partial_{u_j} f(\zo, \vect{1}) &= 
    - \frac{\alpha^{i+j-4}}{(1+\alpha)^{i+j-2}}.
\end{align*}
Injecting those relations in Equation~\eqref{eq:H} leads to
\[
  \HH_{i,j} = 
  - \frac{\alpha^{i+j-4}}{(1+\alpha)^{i+j-2}}
  \left( 1 + \frac{(i-2-\alpha)(j-2-\alpha)}{\alpha(1+\alpha)} \right)
  + \frac{\mathbb{1}_{i=j}}{1+\alpha} \left(\frac \alpha{1+\alpha} \right)^{i-2},
\]
which completes the proof.
\end{proof}

    \section{Configuration model} \label{sec:multi}

The \emph{multigraph process}, 
also known as the \emph{uniform graph model},
produces a random vertex-labelled multigraph 
with $n$ vertices and $m$ edges
by drawing $2m$ labelled vertices
$v_1 w_1 \ldots v_m w_m$
uniformly and independently in $[1,n]$,
and adding to the multigraph the edges $\edge{v_i w_i}$
for $i$ from $1$ to $m$: 
\[
  \Edge(G) = \{ \edge{v_i w_i}\ |\ 1 \leq i \leq m\}.
\]
When the set of degrees of the output
is constrained, the multigraph process
and the configuration model generate 
the same distribution on multigraphs.
The number of \emph{sequences of vertices} $v_1 w_1 \ldots v_m w_m$
that correspond to a given multigraph $G$ is denoted by $\seqv(G)$
\[
  \seqv(G) = |
  \{ v_1 w_1 \ldots v_m w_m\ |\ 
    \{ \edge{v_i w_i}\ |\ 1 \leq i \leq m\} = \Edge(G) \} |.
\]
Observe that a multigraph~$G$ with $m$ edges
contains neither loops nor multiple edges 
(in which case it is \emph{simple})
if and only if 
its number of sequences of vertices $\seqv(G)$
is equal to $2^m m!$.
For this reason, \cite{JKLP93} introduced
the \emph{compensation factor} 
$\kappa(G) = \frac{\seqv(G)}{2^m m!}$.
The probability for the multigraph process
to produce a multigraph $G$ 
in a family $\mathcal{F}$
is then proportional to $\kappa(G)$.
Therefore, we associate to $\mathcal{F}$
the generating function
\[
  F(z) = \sum_{G \in \mathcal{F}} \kappa(G) \frac{z^{n(G)}}{n(G)!},
\]
where $n(G)$ denotes the number of vertices of~$G$.

With this convention, the generating function of paths
is the same in the multigraph process as before,
because paths are simple multigraphs
and their compensation factors are equal to one.
The generating function of cycles becomes
\[
  \Cycle(z, \vect{u}) 
  = 
  \frac{1}{2} \log \frac{1}{1-z} + \sum_{j=1}^q (u_j-1) \frac{z^j}{2 j}.
\]
because in the multigraph process,
a cycle of size one is a loop with compensation factor~$1/2$,
and a cycle of size two is a double-edge with compensation factor~$1/2$.
There is now one more variable $u_1$ that marks
the components of size~$1$, which correspond to the loops.

The set~$\setmg_{n_1, n_2}$ contains the multigraphs
with $n_1$ vertices of degree~$1$ and $n_2$ of degree~$2$.
It is equipped with the distribution induced by the compensation factors
or, equivalently, by the configuration model.
We redefine the generating function~$\MG_{n_1, n_2}(\vect{u})$
such that the sum of the compensation factors of multigraphs in~$\setmg_{n_1,n_2}$
with $m_i$ components of size~$i$ for all $1 \leq i \leq q$ is
\[
  [u_1^{m_1} \ldots u_q^{m_q}] \MG_{n_1, n_2}(\vect{u}).
\]
With those new definitions, 
Lemma~\ref{th:Gexact}, 
Corollary~\ref{th:integralform}, 
Lemma~\ref{th:Gexact2} 
and Corollary~\ref{th:Gapprox}
hold true.
Observe that~$\zeta_{\vect{u}}$ has the same value as for simple graphs,
because its implicit characterization~\eqref{eq:zeta}
only depends on the generating function~$\Line(z, \vect{u})$,
which is unchanged.

For an integer $q\ge 2$, we define the new random variables $V_2, \ldots, V_q$
as in Lemma~\ref{th:limitdistribution},
and set $V_1 = U_1$.
They are gathered into a vector $\mathbf{V} = (V_1, \ldots, V_q)$.
Following the proof of the lemma,
the multivariate Laplace transform
of the variables $\vect{V}$ is
\[
  \LL^{(\vect{V})}_{n_1,n_2}(\vect{t})
  =
  \frac{\MA(0, (e^{t_1}, 1, \ldots, 1))}{\MA(0,\vect{1})}
  e^{\frac{1}{2}
  \smat{t_2 & \cdots & t_q} \cdot \HH \cdot \smat{t_2 & \cdots & t_q}}
  \left( 1 + \smallo_W(1) \right).
\]
By definition,
$
  \MA(\theta,\vect{u}) = \exp \left( \Cycle(\zeta_{\vect{u}} e^{i \theta}, \vect{u} ) \right)
$,
so
\[
  \MA(0,\vect{u}) =
  \exp \left( \frac{1}{2} \log \left( \frac{1}{1-\zeta_{\vect{u}}} \right)
  + \sum_{j=1}^q (u_j-1) \frac{\zeta_{\vect{u}}^j}{2j}
  \right)
\]
and we know that $\zeta_{\vect{1}} = \frac{\alpha}{1+\alpha}$.
Therefore, injecting the relation
\[
  \frac{\MA(0, (e^{t_1}, 1, \ldots, 1))}{\MA(0,\vect{1})}
  =
  e^{\frac{1}{2} \frac{\alpha}{1+\alpha} (t_1 - 1)}
\]
in the expression of the Laplace transform, we obtain
\[
  \LL^{(\vect{V})}_{n_1,n_2}(\vect{t})
  =
  e^{\frac{1}{2} \frac{\alpha}{1+\alpha} (t_1 - 1)}
  e^{\frac{1}{2}
  \vect{t} \cdot \HH \cdot \vect{t}}
  \left( 1 + \smallo_W(1) \right).
\]
It follows that the limit law of $V_1$ is Poisson with parameter 
$\frac{1}{2} \frac{\alpha}{1+\alpha}$, while the limit law of $V_2, \ldots, V_q$
is Gaussian.

\begin{theorem}
Let $\alpha>0$. For $n_1$ an even let $n_2=\lfloor \alpha n_1/2\rfloor$. 
For $j \ge 1$, let $U_j$ denote the number of connected components of size $j$ in 
a multigraph produced by the configuration model in $\setg_{n_1,n_2}$. 
Then, for every $j \geq 2$, as $n_1 \to \infty$ along the 
even integers, 
\[
  \esp [U_j] \sim \frac{\alpha^{j-2}}{(1+\alpha)^{j-1}} \frac {n_1}2
  \qquad \text{and}\qquad
  \var(U_j) = O\left(\frac{n_1}2\right),
\]
and
\[
  \esp [U_1] \sim \frac{1}{2} \frac{\alpha}{1+\alpha}
  \qquad \text{and}\qquad
  \var(U_1) \sim \frac{1}{2} \frac{\alpha}{1+\alpha}.
\]
Furthermore, for any integer $q \ge 2$, as $n_1\to\infty$ along the even integers, 
the vector
\[
\frac{1}{\sqrt{n_1/2}}
  \left(
  U_j
  - \frac{\alpha^{j-2}}{(1+\alpha)^{j-1}}
  \frac{n_1}{2}
  \right)_{2\le j\le q}
\]
converges in distribution to a multivariate Gaussian $\mathcal{N}(\vect{0}, \HH(\alpha))$,
where the positive semi-definite matrix $\HH$ is defined in Theorem~\ref{thm:main},
and $U_1$ converges in distribution to a Poisson random variable of parameter 
$\frac{1}{2} \frac{\alpha}{1+\alpha}$.
\end{theorem}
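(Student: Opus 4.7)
The plan is to transport the proof of Lemma~\ref{th:limitdistribution} to the multigraph setting, exploiting the fact that most of the analytic apparatus is left unchanged. The first step is to reconfirm that Lemma~\ref{th:Gexact}, Corollary~\ref{th:integralform}, Lemma~\ref{th:Gexact2}, and Corollary~\ref{th:Gapprox} all hold verbatim with the modified $\Cycle$ series of Section~\ref{sec:multi}; crucially, $\Line(z,\vect{u})$ and therefore the saddle point $\zu$ defined by \eqref{eq:zeta} are unchanged from the simple graph case. Consequently, the function $\chi$ and the matrix $\HH(\alpha)$ of Lemma~\ref{th:limitdistribution} are identical here, and the Gaussian part of the argument carries over verbatim to the rescaled variables $V_2,\dots,V_q$.

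Next I would handle $U_1$ and the joint convergence. Since loops and double-edges only enter through $\Cycle$, the dependence of the generating function on $u_1$ appears only in the prefactor $\MA(0,\vect{u}) = \exp(\Cycle(\zu,\vect{u}))$, where the only $u_1$-dependent summand is $(u_1-1)\zu/2$. Evaluating this at $\vect{u} = (e^{t_1}, 1, \ldots, 1)$, dividing by $\MA(0,\vect{1})$, and using that $\zu$ does not depend on $u_1$ together with $\zeta_{\vect{1}} = \alpha/(1+\alpha)$, gives the exact factor $\exp\bigl(\tfrac{\alpha}{2(1+\alpha)}(e^{t_1}-1)\bigr)$, which is the moment generating function of a Poisson random variable of parameter $\tfrac{\alpha}{2(1+\alpha)}$. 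The multiplicative structure of the Laplace transform then yields simultaneously the Poisson limit for $V_1 = U_1$, the Gaussian limit for $(V_2,\dots,V_q)$ with the same covariance matrix $\HH(\alpha)$ as in Theorem~\ref{thm:main}, and asymptotic independence between $U_1$ and the rest. Moment asymptotics for $U_j$, $j\ge 2$, follow from differentiating \eqref{eq:laplaceU} exactly as in Lemma~\ref{th:limitdistribution}, while the mean and variance of $U_1$ converge to those of its Poisson limit because the Laplace transform is uniformly bounded on a fixed complex neighborhood of $0$ (a consequence of the uniform estimate in Corollary~\ref{th:Gapprox}), which gives convergence of all moments.

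The main subtlety, and the step I would think about most carefully, is that, unlike in Lemma~\ref{th:limitdistribution}, the variable $t_1$ is not rescaled by $1/\sqrt{k}$ whereas the others are: I must evaluate the Laplace transform at $\vect{u} = (e^{t_1}, e^{t_2/\sqrt{k}}, \ldots, e^{t_q/\sqrt{k}})$. This mixed scaling causes no difficulty because the error term $\smallo_W(1)$ in Corollary~\ref{th:Gapprox} is uniform over a full real neighborhood $W$ of $\vect{1}$, and for every fixed $\vect{t}$, the perturbed $\vect{u}$ lies in $W$ as soon as $|t_1|$ is small enough and $k$ is large enough. Pointwise convergence of the Laplace transform on a neighborhood of $\vect{0}$ then suffices to deduce joint convergence in distribution of $(V_1, V_2, \dots, V_q)$, since the limit is a genuine probability distribution whose Laplace transform characterizes it uniquely.
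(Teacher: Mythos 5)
Your proposal is correct and follows essentially the same route as the paper: transport Lemma~\ref{th:limitdistribution} using the unchanged $\Line$ and $\zu$, isolate the $u_1$-dependence in the prefactor $\MA(0,\vect{u})$, read off the Poisson factor, and deduce joint convergence from the product form of the limiting Laplace transform. Two small remarks. First, your explicit discussion of the mixed scaling (evaluating at $\vect{u}=(e^{t_1},e^{t_2/\sqrt{k}},\dots,e^{t_q/\sqrt{k}})$ and using that the $\smallo_W(1)$ estimate is uniform on a fixed neighborhood) is a point the paper leaves implicit and is worth spelling out as you do. Second, your computed factor $\exp\bigl(\tfrac{\alpha}{2(1+\alpha)}(e^{t_1}-1)\bigr)$ is the correct Poisson moment generating function; the paper's display, which reads $e^{\tfrac{1}{2}\frac{\alpha}{1+\alpha}(t_1-1)}$, contains a typo ($t_1$ in place of $e^{t_1}$), as one can check by noting it does not equal $1$ at $t_1=0$.
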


\setlength{\bibsep}{.3em}
\bibliographystyle{plainnat}
\bibliography{remco}

\end{document}